\documentclass[12pt,reqno,a4paper]{amsart}

\usepackage{latexsym, enumerate}
\usepackage{amssymb,amsmath,mathtools}
\usepackage[hidelinks]{hyperref}

\mathtoolsset{showonlyrefs}

\let\coloneq\relax 
\newcommand{\coloneq}{\coloneqq}

\newcommand{\rd}{{\rm d}}
\newcommand{\e}{{\rm e}}
\newcommand{\Ran}{\mathop{\rm Ran}}
\newcommand{\N}{{\mathbb N}}
\newcommand{\R}{{\mathbb R}}
\newcommand{\C}{{\mathbb C}}

\newcommand\eps{\epsilon}

\newcommand\I{\mathrm{i}}

\DeclareMathOperator{\tr}{Tr}
\DeclareMathOperator{\supp}{supp}
\newcommand{\g}{{\rm g}}

\newtheorem{theorem}{Theorem}[section]

\newtheorem{corollary}[theorem]{Corollary}
\newtheorem{lemma}[theorem]{Lemma}
\newtheorem{remark}[theorem]{Remark}

\begin{document}

\title[{Orthonormal Spectral Cluster Bounds in Nonpositive Curvature}]{Orthonormal Spectral Cluster Bounds on Manifolds with Nonpositive Curvature}

\author{Jean-Claude Cuenin}
\address[Jean-Claude Cuenin]{Department of Mathematical Sciences, Loughborough University, Loughborough, Leicestershire, LE11 3TU United Kingdom}
\email{J.Cuenin@lboro.ac.uk}

\author{Ngoc Nhi Nguyen}
\address[Ngoc Nhi Nguyen]{Laboratoire Paul Painlevé UMR 8524 et équipe projet INRIA PARADYSE, Université de Lille, Cité Scientifique, F-59655 Villeneuve d'Ascq Cedex, France}
\email{ngoc-nhi.nguyen@univ-lille.fr}

\author{Xiaoyan Su}
\address[Xiaoyan Su]{Department of Mathematical Sciences, Loughborough University, Loughborough, Leicestershire, LE11 3TU United Kingdom}
\email{X.Su2@lboro.ac.uk}

\date{May 2026}

\thanks{\copyright\, 2026 by the authors. This paper may be reproduced, in its entirety, for non-commercial purposes.}

\begin{abstract}
Let \((M,g)\) be a closed \(n\)-dimensional Riemannian manifold with nonpositive sectional curvature. We prove sharp, logarithmically improved spectral cluster bounds for orthonormal systems in the supercritical range. More precisely, for spectral windows of size \((\log \lambda)^{-1}\), we obtain the orthonormal analogue of the logarithmically improved \(L^q\) estimates of Hassell--Tacy. Our argument combines the universal orthonormal spectral cluster bounds of Frank--Sabin with Bérard-type kernel estimates and a generalization of the Bourgain--Shao--Sogge--Yao multiplier estimate to the orthonormal setting.
\end{abstract}
\keywords{
Eigenfunctions,
spectral cluster estimates,
nonpositive curvature,
orthonormal systems}

\subjclass{58J50, 35P15, 47B10}

\maketitle

\section{Introduction}

\subsection{Universal bounds}
Let $(M, \g)$ be a  closed $n$-dimensional manifold and $-\Delta_\g $ be the associated Laplace--Beltrami operator, for $n\geq 2$.
For $\lambda\geq 2$ and $\varepsilon(\lambda)\in (0,1]$, the spectral projector with window size $\varepsilon$ and its corresponding eigenspace are defined, using the spectral theorem, as
\[
  \Pi_{\lambda,\varepsilon(\lambda)}\coloneq \mathbf{1}(\sqrt{-\Delta_\g}\in[\lambda,\lambda+\varepsilon(\lambda))), \quad E_{\lambda,\varepsilon(\lambda)}\coloneq\Ran\Pi_{\lambda,\varepsilon(\lambda)} .
\]
For $\varepsilon(\lambda)=1$, Sogge \cite{Sogge-1988} proved the universal bounds
\begin{align}\label{ineq:Sogge-uni-bd}
  \|u\|_{L^q(M)} \lesssim\lambda^{\delta(q)}\|u\|_{L^{2}(M)} ,\quad  
  u\in E_{\lambda,1}
\end{align}
where, for $2\leq q\leq\infty$,
\begin{align}\label{eq:delta(q)}
  \delta(q)\coloneq  \begin{cases}
    \frac{n-1}{2}(\frac{1}{2}-\frac{1}{q}), & 2\leq q\leq q_c ,\\
    n(\frac{1}{2} -\frac{1}{q})-\frac{1}{2},  & q_c\leq q \leq \infty ,
  \end{cases}
  \quad \text{and} \quad
  q_c\coloneq \frac{2(n+1)}{n-1} .
\end{align}
As was shown in \cite{MR3645429}, the universal  bounds \eqref{ineq:Sogge-uni-bd} are sharp on
any closed manifold.

More recently, Frank--Sabin \cite[Theorem 2]{FrankSabin-2017clust} proved the following generalization of \eqref{ineq:Sogge-uni-bd}: for any orthonormal system $(u_j)_{j\in J}\subset E_{\lambda,1}$ and coefficients $\nu=(\nu_j)_{j\in J}\subset\mathbb{C}$,
\begin{align}\label{ineq:FrankSabin-uni-bd}
  \bigg\|\sum_{j\in J}\nu_j |u_j|^2\bigg\|_{L^{q/2}(M)}\lesssim  \lambda^{2\delta(q)} \|\nu\|_{\ell^{\alpha(q)}} ,
\end{align}
where the implicit constant is independent of $\lambda$, $J$, $\nu$ and the orthonormal system, and
\begin{align}\label{eq:alpha-q}
  \alpha(q)\coloneq\begin{cases}
    \frac{2q}{q+2}& 2\leq q\leq q_c ,\\
    \frac{q(n-1)}{2n},  & q_c\leq q \leq \infty .
  \end{cases}
\end{align}
Here and in the following, $J$ denotes any countable index set. By contrast, combining \eqref{ineq:Sogge-uni-bd} with the triangle inequality yields \eqref{ineq:FrankSabin-uni-bd} with \(\alpha = 1\), even without the orthogonality assumption. The key point is that, for $q>2$, Frank--Sabin's inequality holds with some \(\alpha > 1\). This is in line with the Lieb--Thirring philosophy that orthogonality needs space.

\subsection{Logarithmically improved bounds}
We now assume that $(M,g)$ has nonpositive curvature, in the sense that all the sectional curvatures are nonpositive. In this setting, for $\varepsilon(\lambda)=(\log \lambda)^{-1}$, Hassell and Tacy \cite{HassellTacy-2015} obtained the sharp bounds 
\begin{align}\label{eq:Hassell-Tacy}
  \|u\|_{L^{q}(M)} \lesssim_q (\log \lambda)^{-1/2} \lambda^{n(\frac{1}{2}-\frac{1}{q})-\frac{1}{2}}\|u\|_{L^2(M)},\quad u\in E_{\lambda, (\log \lambda)^{-1}}
\end{align}
for all \emph{supercritical} exponents $q>q_c$.  A well-known orthogonality argument shows that \eqref{eq:Hassell-Tacy} cannot be improved.

The case $p=\infty$ in \eqref{eq:Hassell-Tacy} follows from earlier results of B\'erard \cite{Berard-1977}
and is related to improved remainder estimates in the Weyl law,
\begin{align}\label{eq:weyl-law}
  N(\lambda)\coloneq\operatorname{Tr}(\mathbf{1}({-\Delta_\g }<\lambda^2))=|M|\frac{|B^n|}{(2\pi)^n}\lambda^{n}+\mathcal{O}\left(\frac{\lambda^{n-1}}{\log\lambda}\right).
\end{align}
The whole range of supercritical $L^q$ bounds was later established under a weaker dynamical assumption in the work of Canzani and Galkowski \cite{CanzaniGalkowski-2023-growth}.

The aim of this paper is to generalize \eqref{eq:Hassell-Tacy} to systems of orthonormal functions. The following theorem is our main result.

\begin{theorem}\label{thm:main}
  Let $(M,g)$ be a closed Riemannian manifold of dimension $n$, all of whose sectional curvatures are nonpositive, and let $q>q_c$ and $1\leq \beta<\frac{q(n-1)}{2n}$. Then for any orthonormal system $(u_j)_{j\in J}\subset E_{\lambda,(\log\lambda)^{-1}}$ and coefficients $\nu=(\nu_j)_{j\in J}\subset\C$,
  \begin{align}\label{ineq:main}
     \bigg\|\sum_{j\in J}\nu_j |u_j|^2\bigg\|_{L^{q/2}(M)}\lesssim_{q,\beta} (\log\lambda)^{-1} \lambda^{n(1-\frac{2}{q})-1} \|\nu\|_{\ell^{\beta}}.
  \end{align}
\end{theorem}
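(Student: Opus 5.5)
The approach is the duality principle of Frank--Sabin: \eqref{ineq:main} is equivalent to a Schatten-norm bound
\begin{align}
  \|W \Pi W\|_{\mathfrak{S}^{\beta'}} \lesssim (\log\lambda)^{-1}\lambda^{n(1-\frac{2}{q})-1}\|W\|_{L^{2r}}^2,\qquad r=(q/2)',
\end{align}
where $\Pi=\Pi_{\lambda,(\log\lambda)^{-1}}$. To bound $W\Pi W$, I replace $\Pi$ by a smoothed operator $\rho\big(T(\lambda-\sqrt{-\Delta_\g})\big)$ with $T=c\log\lambda$. Here $\hat\rho$ is supported in $[-1,1]$ and $\rho\ge 0$ on the window. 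Then $\Pi\le C\,\rho_T\rho_T^*$, and it suffices to estimate $W\rho_T W$ in $\mathfrak S^{\beta'}$; the $TT^*$ form gives the $(\log\lambda)^{-1}$ factor when combined with the unit-window result. Writing
\[
\rho_T=\frac{1}{2\pi T}\int \hat\rho(t/T)\,\e^{\I t\lambda}\e^{-\I t\sqrt{-\Delta_\g}}\,\rd t,
\]
I split $\hat\rho(t/T)=\hat\rho(t/T)\chi(t)+\hat\rho(t/T)(1-\chi(t))$, with $\chi$ a cutoff supported in $|t|\le 1$. This splits $\rho_T$ into a local part $L$ and a global part $G$.

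\textbf{Local part.} $L$ is $T^{-1}$ times an operator of the type handled by the universal bound \eqref{ineq:FrankSabin-uni-bd} on unit-size windows. By duality, \eqref{ineq:FrankSabin-uni-bd} gives
\[
\|WLW\|_{\mathfrak S^{\alpha(q)'}}\lesssim T^{-1}\lambda^{2\delta(q)}\|W\|_{2r}^2 .
\]
Since $\beta\le \alpha(q)$, we have $\beta'\ge\alpha(q)'$, so this is acceptable. The orthonormal form of the Bourgain--Shao--Sogge--Yao multiplier estimate is presumably what controls the passage from the sharp projector to $\rho_T$ in $\mathfrak S^{\beta'}$ norms.

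\textbf{Global part.} Lift to the universal cover, which has no conjugate points by Cartan--Hadamard. Bérard's parametrix gives the kernel bound
\[
|G(x,y)|\lesssim T^{-1}\lambda^{\frac{n-1}{2}}\e^{CT}
\]
uniformly, for $1\le|t|\le T$. With $c$ small, $\e^{CT}\le\lambda^{\epsilon'}$. I would interpolate between two endpoints. The first is $\mathfrak S^2$ (Hilbert--Schmidt), using $\|W G W\|_{\mathfrak S^2}\le\|G\|_{L^\infty}\|W\|_{L^2}^2$. The second is $\mathfrak S^\infty$, using the trivial operator bound $\|G\|_{L^2\to L^2}\lesssim 1$ together with an $L^{2r}$ argument. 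More precisely, I would use complex interpolation in an analytic family $G_z$ (as Frank--Sabin do). At $\operatorname{Re} z=0$ the bound is $L^2\to L^2$; at the other end of the strip the bound is $\mathfrak S^2$ with kernel of size $\lambda^{\frac{n-1}{2}+\epsilon'}$.

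\textbf{Main obstacle.} The key difficulty is getting $\beta'$ strictly above $\alpha(q)'$ while absorbing the $\lambda^{\epsilon'}$ loss. For $q>q_c$ the target exponent exceeds what the global part needs by a positive power of $\lambda$, so $\lambda^{\epsilon'}$ is affordable once $c$ is chosen depending on $q$ and $\beta$. I expect this is why the condition $\beta<\frac{q(n-1)}{2n}$ is strict.
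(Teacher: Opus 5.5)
\textbf{There is a genuine gap in your global part, for the larger values of $\beta$.}

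Much of your outline is sound. The reduction works: duality, then majorizing $\Pi$ by a positive multiplier with compactly supported Fourier transform. The local part works, since $T^{-1}$ times the universal bound gives the estimate in $\mathfrak S^{\alpha(q)'}\subset\mathfrak S^{\beta'}$. Your interpolation for the global part is also fine. Interpolating between the trivial endpoint $\|W_1GW_2\|_{\mathfrak S^\infty}\le\|G\|\,\|W_1\|_{L^\infty}\|W_2\|_{L^\infty}$ and the Hilbert--Schmidt endpoint gives the same bound $\lambda^{(n-1)(\frac12-\frac1q)+O(\epsilon')}$ as the paper. The paper interpolates between the $q_c$ endpoint $\mathfrak S^{n+1}$ and Hilbert--Schmidt, and that $q_c$ point lies on your line.

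The problem is that this interpolation ties the Schatten index to the Lebesgue index. With $W\in L^{2(q/2)'}$ you only ever get a bound in $\mathfrak S^{2(q/2)'}$. For $q>q_c$ one has $\alpha(q)'<2(q/2)'$, equivalently $\frac{q(n-1)}{2n}>\frac{2q}{q+2}$. So your argument proves the theorem only for $\beta\le\frac{2q}{q+2}$. For $\frac{2q}{q+2}<\beta<\frac{q(n-1)}{2n}$ you need an estimate in the stronger norm $\mathfrak S^{\beta'}$ with $\beta'<2(q/2)'$. The power room $\frac{n-1}{2}(1-\frac{q_c}{q})$ in the $\mathfrak S^{2(q/2)'}$ bound does not turn into such an estimate by itself.

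An unspecified analytic family $G_z$ does not close this. With only a uniform kernel bound on a compact manifold, there is no Hardy--Littlewood--Sobolev-type gain to separate the two indices. Your ``main obstacle'' paragraph identifies the power room but gives no way to lower the Schatten index. That room is also independent of $\beta$, so it cannot be where the strict inequality $\beta<\frac{q(n-1)}{2n}$ is used.

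\textbf{The missing idea is a finite-rank argument.} Split the global multiplier as $m_{\rm glo}=m_{\rm near}+m_{\rm far}$, with $m_{\rm near}$ supported in $|\tau-\lambda|\le 2R$.
\begin{enumerate}[(i)]
\item By the Weyl law \eqref{eq:weyl-law}, $\operatorname{rank}(Wm_{\rm near}(P)\overline W)\lesssim R\lambda^{n-1}$.
\item H\"older in Schatten classes then gives $\|\cdot\|_{\mathfrak S^{\beta'}}\le (R\lambda^{n-1})^{\frac1{\beta'}-\frac1{2(q/2)'}}\|\cdot\|_{\mathfrak S^{2(q/2)'}}$.
\item Combined with your $\mathfrak S^{2(q/2)'}$ bound, the main exponent becomes $(n-1)(1-\frac1\beta)$.
\item This beats the target $n(1-\frac2q)-1$ by $(n-1)\big(\frac1\beta-\frac{2n}{q(n-1)}\big)>0$. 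This is exactly where $\beta<\frac{q(n-1)}{2n}$ enters.
\item Take $R$ a small power of $\lambda$ and the loss $\epsilon'$ small, both depending on $q$ and $\beta$.
\item The far part is $O(R^{-N}\lambda^{2\delta(q)})$ in $\mathfrak S^{\alpha(q)'}$. This follows from the orthonormal multiplier theorem (Theorem~\ref{lemma multiplier}) and the rapid decay of $m_{\rm glo}$ away from $\lambda$.
\end{enumerate}

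A minor point: B\'erard's kernel bound is for $\cos(tP)$, not $\e^{-\I tP}$. You should pass to $\cos(tP)$ and discard the smoothing term $a(-T(P+\lambda))$, as the paper does.
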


The theorem is optimal in the following two senses:
first, the factor $(\log\lambda)^{-1} \lambda^{n(1-\frac{2}{q})-1}$ cannot be improved. This follows from the optimality of~ \eqref{eq:Hassell-Tacy} by taking $\# J=1$.
Second, if $q<\infty$, the inequality fails for all $\beta\ge  \frac{q(n-1)}{2n}$. This is seen by taking an orthonormal basis $(u_j)_{j\in J}$ of the eigenspace $E_{\lambda,(\log\lambda)^{-1}}$ and $\nu_j=1$ for all $j\in J$. Indeed, by H\"older's inequality,
\begin{align*}
&|M|^{1-\frac{2}{q}}\limsup_{\lambda \to \infty}  \lambda^{1-n} (\log \lambda)   \bigg\|\sum_{j\in J} |u_j|^2\bigg\|_{L^{q/2}(M)}
\geq 
\limsup_{\lambda \to \infty}  \lambda^{1-n} (\log \lambda)   \bigg\lVert\sum_{j\in J} |u_j|^2\bigg\rVert_{L^{1}(M)}
\\
&=\limsup_{\lambda \to \infty} \lambda^{1-n} (\log \lambda)\dim E_{\lambda,(\log\lambda)^{-1}}
=\limsup_{\lambda \to \infty} \lambda^{1-n} (\log \lambda)(N(\lambda+(\log\lambda)^{-1})-N(\lambda))>0
\end{align*}
where the final inequality follows from the elementary fact that $\limsup_{\lambda \to \infty}  \lambda^{-n}N(\lambda)>0$.
On the other hand, $\|\nu\|_{\ell^{\beta}}=(\# J)^{1/\beta}$.
Combining these observations, we see that \eqref{ineq:main} can only hold if
\begin{align}
  \frac{\lambda^{n-1}}{\log\lambda}\lesssim\lambda^{n(1-\frac{2}{q})-1}(\log\lambda)^{-1 }\Big(\frac{\lambda^{n-1}}{\log\lambda}\Big)^{\frac{1}{\beta}}.
\end{align}
It follows from the relation $n(1-\frac{2}{q})-1 + (n-1)\frac{2n}{q(n-1)} = n-1$ that we must have $\beta < \frac{q(n-1)}{2n}$ if \(q<\infty\). For \(q=\infty\), \eqref{ineq:main} holds with $\beta=\infty$. This was proved by  Ren and Zhang \cite{RenZhang-2024}. Alternatively, it follows from \eqref{eq:Hassell-Tacy} and, e.g., \cite[Proposition 2.1]{CueninNguyenSu2025}.

\subsection*{Notation} The notation $A\lesssim B$ means $A\leq CB$, for some unspecified
constant $C$ that may depend on fixed quantities such as $M,q$ and $\beta$ but not on $\lambda,J,\nu$ or the orthonormal system (or on $W,W_1,W_2$ in the dual estimates). To emphasize the dependence on parameters, we sometimes use subscripts, e.g. $A\lesssim_{q,\beta} B$. When we write $A\lesssim_N B_N$, we mean that for every $N>0$ there exists a constant $C_N$ such that $A\leq C_N B_N$.

%
For an operator $S:L^2(M)\to L^2(M)$, we denote its operator norm by $\|S\|$ and its $p$-Schatten norm by $\|S\|_{\mathfrak{S}^p}\coloneqq (\tr(S^*S)^{\frac{p}{2}})^{\frac{1}{p}}$ for $1\leq p<\infty$. We often omit $M$ from the notation, e.g. we write $L^p$ instead of $L^p(M)$.


\section{Spectral multiplier theorem for orthonormal systems}\label{sec:multiplier}

In this section, we prove a spectral multiplier estimate that only uses the universal bounds of Frank--Sabin as input. This is a generalization of the Bourgain--Shao--Sogge--Yao multiplier estimate \cite[Lemma 2.3]{BourgainShaoYaoSogge-2015} to the orthonormal setting and may be of independent interest.

We set
\begin{align*}
P=\sqrt{-\Delta_\g}\quad \mbox{and}\quad
  \Pi_{k,1}\coloneq\mathbf{1}(P\in [k,k+1))\ \mbox{for }k\in \N\cup\{0\}.
\end{align*}

\begin{theorem}\label{lemma multiplier}
  Let $(M,\g)$ be a closed Riemannian manifold of dimension $n\geq 2$, let $m:[0,\infty)\to \C$ be a bounded Borel function and $2\leq q\leq\infty$. Let $\delta(q)$ and $\alpha(q)$ be given by \eqref{eq:delta(q)} and \eqref{eq:alpha-q}, respectively. Assume that
  \begin{align*}
    K_q(m)\coloneq\bigg(\sum_{k=0}^{\infty}\sup_{\tau\in [k,k+1)}|m(\tau)|^2(1+k)^{2\delta(q)}\bigg)^{1/2}<\infty .
  \end{align*}
  Then for any orthonormal system $(u_j)_{j\in J}\subset L^2(M)$ and coefficients $\nu=(\nu_j)_{j\in J}\subset\C$,
  \begin{align}\label{eq.multiplier 1}
    \bigg\|\sum_{j\in J}\nu_j |m(P)u_j|^2\bigg\|_{L^{q/2}(M)}\lesssim K_q(m)^2\|\nu\|_{\ell^{\alpha(q)}}.
  \end{align}
  Moreover, for all $W_1,W_2\in L^{2(q/2)'}(M)$,
  \begin{align}\label{eq.multiplier 2}
\|W_1m(P)W_2\|_{\mathfrak{S}^{\alpha(q)'}(L^2(M))}\lesssim K_q(|m|^{1/2})^2 \|W_1\|_{L^{2(q/2)'}(M)}\|W_2\|_{L^{2(q/2)'}(M)}.
  \end{align}
\end{theorem}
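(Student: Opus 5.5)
We argue by duality, first proving \eqref{eq.multiplier 2} and then deducing \eqref{eq.multiplier 1}.

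\textbf{Step 1: the dual form of \eqref{ineq:FrankSabin-uni-bd}.} By the standard duality principle of Frank--Sabin, \eqref{ineq:FrankSabin-uni-bd} for all orthonormal systems in $E_{k,1}$ is equivalent to
\begin{align*}
\|W\Pi_{k,1}\|_{\mathfrak{S}^{2\alpha(q)'}}^2=\|W\Pi_{k,1}\overline{W}\|_{\mathfrak{S}^{\alpha(q)'}}\lesssim (1+k)^{2\delta(q)}\|W\|_{L^{2(q/2)'}}^2 .
\end{align*}
For small $k$ we use the trivial bound, since $\Pi_{k,1}$ then has finite rank with bounded kernel.

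\textbf{Step 2: bilinear estimate via Cauchy--Schwarz.} Write $m=|m|^{1/2}\cdot \omega|m|^{1/2}$ with $|\omega|=1$, and decompose
\begin{align*}
W_1m(P)W_2=\sum_k W_1\Pi_{k,1}\, m(P)\, \Pi_{k,1}W_2 = \sum_k A_k^{*}U_kB_k ,
\end{align*}
where $A_k=|m|^{1/2}(P)\Pi_{k,1}\overline{W_1}$, $B_k=|m|^{1/2}(P)\Pi_{k,1}W_2$, and $U_k=\omega(P)\Pi_{k,1}$ is a contraction.

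Set $s_k=\sup_{[k,k+1)}|m|^{1/2}$. By Step 1,
\begin{align*}
\|B_k\|_{\mathfrak{S}^{2\alpha'}}\le s_k\|\Pi_{k,1}W_2\|_{\mathfrak{S}^{2\alpha'}}\lesssim s_k(1+k)^{\delta}\|W_2\|_{L^{2(q/2)'}},
\end{align*}
and similarly for $A_k$. The sum $\sum_k A_k^*U_kB_k$ equals $A^*UB$, where $A$ and $B$ are the block column operators $A=(A_k)_k$, $B=(B_k)_k$ from $L^2$ into $\bigoplus_k L^2$, and $U=\oplus_k U_k$ is a contraction. Hölder's inequality in Schatten classes gives
\begin{align*}
\|A^*UB\|_{\mathfrak{S}^{\alpha'}}\le\|A\|_{\mathfrak{S}^{2\alpha'}}\|B\|_{\mathfrak{S}^{2\alpha'}} .
\end{align*}
Here $\|B\|_{\mathfrak{S}^{2\alpha'}}^2=\|B^*B\|_{\mathfrak{S}^{\alpha'}}=\|\sum_k B_k^*B_k\|_{\mathfrak{S}^{\alpha'}}$. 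Since $\alpha'\ge1$, the triangle inequality yields
\begin{align*}
\|B\|_{\mathfrak{S}^{2\alpha'}}^2\le\sum_k\|B_k\|_{\mathfrak{S}^{2\alpha'}}^2\lesssim K_q(|m|^{1/2})^2\|W_2\|_{L^{2(q/2)'}}^2 ,
\end{align*}
and likewise for $A$. Together these give \eqref{eq.multiplier 2}. This block-operator step is the key one. It requires the square summation $\sum_k s_k^2(1+k)^{2\delta}$ to exactly match $K_q(|m|^{1/2})^2$, which it does because $s_k^2=\sup_{[k,k+1)}|m|$.

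\textbf{Step 3: deducing \eqref{eq.multiplier 1}.} By the duality principle, \eqref{eq.multiplier 1} is equivalent to
\begin{align*}
\|W\,m(P)m(P)^*\,\overline{W}\|_{\mathfrak{S}^{\alpha'}}\lesssim K_q(m)^2\|W\|^2_{L^{2(q/2)'}}
\end{align*}
for all $W$ (one may assume $W\ge0$). Apply Step 2 to the multiplier $|m|^2$, with $W_1=W$ and $W_2=\overline W$. Then $K_q(|m|^{1/2}\circ|m|^2)=K_q(|m|)=K_q(m)$, which is exactly the required bound. Equivalently, and more directly, set $T=m(P)^*\overline W$ and run the argument of Step 2 on $\|T\|_{\mathfrak{S}^{2\alpha'}}^2=\|T^*T\|_{\mathfrak{S}^{\alpha'}}$.

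The case $q=\infty$, where $\alpha'=1$ and the norms are $L^2$, is handled in the same way. The main point requiring care is checking that the duality principle applies with the stated exponents, in particular the passage from $(q/2)'$ to $\alpha(q)'$.
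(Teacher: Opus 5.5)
Your argument is correct. It rests on the same ingredients as the paper's: the dual Frank--Sabin bound for each $\Pi_{k,1}$, orthogonality of the windows, $\|X\|_{\mathfrak S^{2p}}^2=\|X^*X\|_{\mathfrak S^{p}}$, and the triangle inequality in $\mathfrak S^{\alpha(q)'}$. The route differs in the order of the two estimates and in how the supremum of $|m|$ on each window is extracted.

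The paper first proves the linear dual bound $\|Wm(P)\|^2_{\mathfrak S^{2\alpha(q)'}}\lesssim K_q(m)^2\|W\|^2_{L^{2(q/2)'}}$ by factoring $m(P)=\big(\sum_k c_{k,\varepsilon}\Pi_{k,1}\big)S_\varepsilon$ with a contraction $S_\varepsilon$. This forces the regularization $c_{k,\varepsilon}=\sup_{[k,k+1)}|m|+\varepsilon$, to avoid dividing by zero. It also needs a preliminary finite-support assumption on $m$, removed afterwards by a limiting argument. The paper then gets \eqref{eq.multiplier 2} from the factorization $W_1|m|^{1/2}(P)\cdot\frac{m}{|m|}(P)\cdot|m|^{1/2}(P)W_2$.

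You go the other way: \eqref{eq.multiplier 2} first, then \eqref{eq.multiplier 1} by applying it to $|m|^2$, which works because $K_q$ depends only on $|m|$. Your block operators are the paper's factors in disguise. Indeed $B^*B=\sum_kB_k^*B_k=\overline{W_2}\,|m|(P)\,W_2$ gives $\|B\|_{\mathfrak S^{2\alpha'}}=\||m|^{1/2}(P)W_2\|_{\mathfrak S^{2\alpha'}}$, and likewise $\|A\|_{\mathfrak S^{2\alpha'}}=\|W_1|m|^{1/2}(P)\|_{\mathfrak S^{2\alpha'}}$. So the real difference is only in how this norm is bounded: you expand over windows and use $\||m|^{1/2}(P)\Pi_{k,1}\|\le s_k$ blockwise.

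What your version buys is a slightly cleaner proof with no $\varepsilon$ and no truncation, because $\sum_kB_k^*B_k$ converges absolutely in $\mathfrak S^{\alpha'}$ to $B^*B$. That convergence is worth stating explicitly, together with the usual density reduction to bounded $W$. The paper's version, in turn, directly produces the linear estimate that the duality principle asks for.
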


\begin{proof}
  By Frank--Sabin's duality principle \cite[Lemma 3]{FrankSabin-2017rest},~\eqref{eq.multiplier 1} is equivalent to
  \begin{align}\label{spectral multiplier bound dual version}
    \|Wm(P)\|^2_{\mathfrak{S}^{2\alpha(q)'}}\lesssim K_q(m)^2\|W\|^2_{L^{2(q/2)'}} ,\quad W\in L^{2(q/2)'}.
  \end{align}
  Assume first that $\supp m\subset [0,N+1]$ for some $N\in\N$. For $\varepsilon>0$, we define
  \begin{align}
S_\varepsilon\coloneq\sum_{k=0}^Nc_{k,\varepsilon}^{-1}\Pi_{k,1}m(P),\quad c_{k,\varepsilon}\coloneq\sup_{\tau\in [k,k+1)}|m(\tau)|+\varepsilon.
  \end{align}
  Using orthogonality,
  \begin{align}\label{orthogonality}
    \Pi_{k,1}\Pi_{k',1}=\delta_{k,k'}\Pi_{k,1} ,\quad k,k'\in\N ,
  \end{align}
  and the support assumption on $m$,
  we observe that
  \begin{align}
    \sum_{k=0}^Nc_{k,\varepsilon}   \Pi_{k,1} S_\varepsilon=\sum_{k=0}^N\Pi_{k,1}m(P)=m(P).
  \end{align}
  By the spectral theorem,  
  \begin{align}\label{eq: norm S leq 1}
\|S_\varepsilon\|_{L^2\to L^2}\leq 1.    
  \end{align}
  Thus
  \begin{align*}
    \|Wm(P)\|^2_{\mathfrak{S}^{2\alpha(q)'}}
    &\leq \bigg\|\sum_{k=0}^Nc_{k,\varepsilon}   W\Pi_{k,1}\bigg\|^2_{\mathfrak{S}^{2\alpha(q)'}}
    =\bigg\|\sum_{k=0}^Nc_{k,\varepsilon}^2   W\Pi_{k,1}\overline{W}\bigg\|_{\mathfrak{S}^{\alpha(q)'}}
    \leq \sum_{k=0}^{\infty}c_{k,\varepsilon}^2\|W\Pi_{k,1}\overline{W}\|_{\mathfrak{S}^{\alpha(q)'}}\\
    &\lesssim \Big( \sum_{k=0}^{\infty}\sup_{\tau\in[k,k+1)}|m(\tau)|^2(1+k)^{2\delta(q)}+ \varepsilon^2 N^{2\delta(q)+1} \Big) \|W\|^2_{L^{2(q/2)'}}
  \end{align*}
where we used $\|AB\|_{\mathfrak S^p}\leq \|A\|_{\mathfrak S^p}\|B\|$ and \eqref{eq: norm S leq 1} in the first inequality, followed by
\(\|A\|_{\mathfrak S^{2p}}^2=\|AA^*\|_{\mathfrak S^p}\) and orthogonality of spectral projectors to eliminate cross terms. The final inequality
uses the Frank--Sabin bound \eqref{ineq:FrankSabin-uni-bd} for spectral projectors, or more precisely, the dual inequality
\[
\|W\Pi_{k,1}\overline{W}\|_{\mathfrak{S}^{\alpha(q)'}}\lesssim (1+k)^{2\delta(q)}\|W\|^2_{L^{2(q/2)'}},
\]
see \cite[(16)]{FrankSabin-2017clust}. Passing to the limit $\varepsilon\to 0$ yields ~\eqref{spectral multiplier bound dual version}. 

To remove the support assumption, observe that, since $P$ is a self-adjoint operator with discrete spectrum, $\mathbf{1}_{[0,N+1]}(P)$ is a sequence of monotonically increasing finite-dimensional orthogonal projections that converges strongly to the identity operator on $L^2(M)$, as $N\to\infty.$ Setting $m_N(P):=m(P)\mathbf{1}_{[0,N+1]}(P)$, it follows by \cite[Theorem 5.2]{MR246142} that
\[
\|Wm(P)\|^2_{\mathfrak{S}^{2\alpha(q)'}}\leq \sup_N\|Wm_N(P)\|^2_{\mathfrak{S}^{2\alpha(q)'}}
\lesssim K_q(m_N)^2\|W\|^2_{L^{2(q/2)'}}\leq K_q(m)^2\|W\|^2_{L^{2(q/2)'}}.
\]
  
The bound~\eqref{eq.multiplier 2} follows from~\eqref{spectral multiplier bound dual version} and its dual, applied to $|m|^{1/2}$, since
  \begin{align*}
\|W_1m(P)W_2\|_{\mathfrak{S}^{\alpha(q)'}}
    &\leq 
    \left\lVert W_1|m|^{1/2}(P)\right\rVert_{\mathfrak{S}^{2\alpha(q)'}}
    \bigg \lVert\frac{m}{|m|}(P)\bigg \rVert
    \left\lVert |m|^{1/2}(P)W_2\right\lVert_{\mathfrak{S}^{2\alpha(q)'}}
  \end{align*}
  and $\frac{m}{|m|}(P)$ is a partial isometry.
\end{proof}

We record the following application of the multiplier theorem to resolvent powers.

\begin{corollary}
Let $(M,\g)$ be a closed Riemannian manifold of dimension $n\geq 2$. Let $2\leq q\leq\infty$ and $\gamma>1/2$. Then for any orthonormal system $(u_j)_{j\in J}\subset \Ran\mathbf{1}_{[0,2\lambda]}(P)$ and coefficients $\nu=(\nu_j)_{j\in J}\subset\C$,
  \begin{align*}
    \bigg\|\sum_{j\in J}\nu_j |(-\Delta_{\g}-(\lambda+i)^2)^{-\gamma}u_j|^2\bigg\|_{L^{q/2}(M)}\lesssim_{\gamma} \lambda^{2\delta(q)-2\gamma}\|\nu\|_{\ell^{\alpha(q)}}.
  \end{align*}
\end{corollary}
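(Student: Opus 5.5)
Apply Theorem~\ref{lemma multiplier}, estimate \eqref{eq.multiplier 1}, with the multiplier
\[
m(\tau)\coloneq (\tau^2-(\lambda+i)^2)^{-\gamma}\,\mathbf{1}_{[0,2\lambda]}(\tau),
\]
using a fixed branch of the complex power. Since each $u_j$ lies in $\Ran\mathbf{1}_{[0,2\lambda]}(P)$, we have $(-\Delta_\g-(\lambda+i)^2)^{-\gamma}u_j=m(P)u_j$. Hence \eqref{eq.multiplier 1} gives the left-hand side $\lesssim K_q(m)^2\|\nu\|_{\ell^{\alpha(q)}}$, and it remains to show
\[
K_q(m)^2\lesssim_\gamma \lambda^{2\delta(q)-2\gamma}.
\]

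For this we bound $|m|$ pointwise. Write $\tau^2-(\lambda+i)^2=(\tau-\lambda-i)(\tau+\lambda+i)$. For $\tau\ge0$ we have $|\tau+\lambda+i|\ge\lambda$ and $|\tau-\lambda-i|\ge \max(1,|\tau-\lambda|)\gtrsim 1+|\tau-\lambda|$. Therefore
\[
|m(\tau)|\lesssim \lambda^{-\gamma}(1+|\tau-\lambda|)^{-\gamma}\quad\text{for }\tau\in[0,2\lambda],
\]
and $m$ vanishes otherwise. On each interval $[k,k+1)$ with $k\le 2\lambda$ this gives
\[
\sup_{\tau\in[k,k+1)}|m(\tau)|^2\lesssim \lambda^{-2\gamma}(1+|k-\lambda|)^{-2\gamma},
\]
where passing from $\tau$ to $k$ costs at most an absolute constant, since the intervals have length one. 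Also $(1+k)^{2\delta(q)}\lesssim\lambda^{2\delta(q)}$ for $k\le2\lambda$, because $\delta(q)\ge0$. Combining these,
\[
K_q(m)^2\lesssim \lambda^{2\delta(q)-2\gamma}\sum_{k\in\mathbb{Z}}(1+|k-\lambda|)^{-2\gamma}\lesssim_\gamma \lambda^{2\delta(q)-2\gamma},
\]
since the sum converges uniformly in $\lambda$ exactly when $2\gamma>1$. This is where the hypothesis $\gamma>1/2$ enters, and the corollary follows.

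The argument has no serious obstacle. The points needing care are bookkeeping:
\begin{itemize}
\item Fix the branch of the complex power consistently between the functional calculus and $m$; only the modulus $|(\tau^2-(\lambda+i)^2)^{-\gamma}|=|\tau^2-(\lambda+i)^2|^{-\gamma}$ enters $K_q$.
\item Check the uniform lower bound on $|\tau-\lambda-i|$ near $\tau=\lambda$, which is supplied by the imaginary part $i$.
\item Note that the cutoff $\mathbf{1}_{[0,2\lambda]}$ is what makes the weight $(1+k)^{2\delta(q)}$ harmless. Without it, $K_q$ would require $2\gamma>2\delta(q)+1$ for summability at large $k$.
\end{itemize}
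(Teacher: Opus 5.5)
Your proposal is correct and is the paper's argument: you feed the same multiplier $m(\tau)=(\tau^2-(\lambda+i)^2)^{-\gamma}\mathbf{1}_{[0,2\lambda]}(\tau)$ into \eqref{eq.multiplier 1} and bound $K_q(m)\lesssim\lambda^{\delta(q)-\gamma}\big(\sum_{k\le2\lambda}(1+|k-\lambda|)^{-2\gamma}\big)^{1/2}$, filling in the pointwise details the paper leaves implicit. One harmless slip in your closing side remark: for $\tau\gg\lambda$ both factors grow, so $|m(\tau)|\sim\tau^{-2\gamma}$, and without the cutoff summability would need $4\gamma>2\delta(q)+1$, not $2\gamma>2\delta(q)+1$; this plays no role in the proof.
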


\begin{proof}
Consider the function 
$m_{\gamma}(\tau)\coloneqq (\tau^2-(\lambda+i)^2)^{-\gamma}\mathbf{1}_{[0,2\lambda]}(\tau).$
Then, since $\gamma>1/2$,
\[
K_q(m_{\gamma})\lesssim \lambda^{\delta(q)-\gamma}
\bigg(\sum_{k\leq 2\lambda}(1+|k-\lambda|)^{-2\gamma}\bigg)^{1/2}
\lesssim_{\gamma} \lambda^{\delta(q)-\gamma}.\qedhere
\]
\end{proof}

\begin{remark}
 The case $\gamma=1$ also follows from \cite[Proposition 3.1]{CueninNguyenSu2025}. The elliptic part of resolvent powers, corresponding to the projection onto $\Ran\mathbf{1}_{[2\lambda,\infty)}(P)$, can be handled by \cite[Proposition 24]{FrankSabin-2017clust}, combined with \cite[Lemma 2.4]{CueninNguyenSu2025}.    
\end{remark}

\section{Proof of Theorem~\ref{thm:main}}\label{sect. Bourgain--Shao--Sogge--Yao argument}
In the following,  let $T=c_0\log\lambda$, where $c_0$ is a small constant to be fixed later. 
  We adapt an argument of Bourgain--Shao--Sogge--Yao \cite{BourgainShaoYaoSogge-2015} to the case of orthonormal systems. Let $q>q_c,$ and $1\leq\beta<\frac{q(n-1)}{2n}$. By duality, to prove ~\eqref{ineq:main} it suffices to show that
  \begin{align}\label{eq:dual main estimate}
    \|W\Pi_{\lambda,T^{-1}}\overline{W}\|_{\mathfrak{S}^{\beta'}}\lesssim_{q,\beta} \lambda^{n(1-\frac{2}{q})-1}(\log\lambda)^{-1}\|W\|^2_{L^{2(q/2)'}}.
  \end{align}
  Let $a$ be a nonnegative Schwartz function on $\R$ such that
  \begin{align}
    a\geq \mathbf{1}_{[-1,1]} \quad\text{and}\quad \supp(\widehat{a})\subset[-1,1].
  \end{align}
  Since $a\geq \mathbf{1}_{[-1,1]}$, it follows that
  \begin{align}
    Wa(T(P-\lambda))\overline{W}\geq W\Pi_{\lambda,T^{-1}} \overline{W}
  \end{align}
  in the quadratic form sense.
  Since for positive compact operators \(0\le B\le A\), one has \(\|B\|_{\mathfrak S^p}\le \|A\|_{\mathfrak S^p}\), \eqref{eq:dual main estimate} would thus follow from
  \begin{align}\label{eq:dual main estimate smooth}
    \|Wa(T(P-\lambda))\overline{W}\|_{\mathfrak{S}^{\beta'}}\lesssim \lambda^{n(1-\frac{2}{q})-1}(\log\lambda)^{-1}\|W\|^2_{L^{2(q/2)'}}.
  \end{align}
  By Fourier inversion, we can write
  \begin{align}
    a(T(P-\lambda))=\frac{1}{2\pi T}\int_{-\infty}^{\infty}\widehat{a}(t/T)\e^{-\I t\lambda}\e^{\I tP}\rd t .
  \end{align}
  Using that $2\cos (tP) = e^{i t P} + e^{-i t P}$ and discarding the smoothing operator $a(-T(P+\lambda))$ arising from the $e^{-i t P}$ term (see Remark \ref{rem:smoothing operator} below), it suffices to consider
  \begin{align}
    A\coloneq\frac{1}{T}\int_{-\infty}^{\infty}\widehat{a}(t/T)\e^{-\I t\lambda}\cos(tP) \rd t
  \end{align}
  and to prove 
  \begin{align}\label{eq:dual main estimate smooth cos}
    \|WA\overline{W}\|_{\mathfrak{S}^{\beta'}}\lesssim \lambda^{n(1-\frac{2}{q})-1}(\log\lambda)^{-1}\|W\|^2_{L^{2(q/2)'}},\quad q>q_c,\quad \beta<\frac{q(n-1)}{2n} .
  \end{align}

  We now split $A$ into a local and global part, $A=A_{\rm loc}+A_{\rm glo}$, where
  \begin{align}
    A_{\rm loc}\coloneq\frac{1}{T}\int_{-\infty}^{\infty}b(t)\widehat{a}(t/T)\e^{-\I t\lambda}\cos(tP) \rd t,\quad
    A_{\rm glo}\coloneq\frac{1}{T}\int_{-\infty}^{\infty}(1-b(t))\widehat{a}(t/T)\e^{-\I t\lambda}\cos(tP) \rd t 
  \end{align}
  and $b\in C_c^{\infty}([-2,2])$ is such that \(b=1\) on \([-1,1]\).

\begin{lemma}[Estimate for the local part]
For $2\leq q\leq\infty$,
  \begin{align}\label{eq:Aloc at q}
    \|WA_{\rm loc}\overline{W}\|_{\mathfrak{S}^{\alpha(q)'}}\lesssim_q \lambda^{2\delta(q)}(\log\lambda)^{-1}\|W\|^2_{L^{2(q/2)'}} ,\quad W\in L^{2(q/2)'}(M),
  \end{align}
  where $\delta(q)$ and $\alpha(q)$ are given by \eqref{eq:delta(q)} and \eqref{eq:alpha-q}, respectively.
\end{lemma}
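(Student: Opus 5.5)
The plan is to write $A_{\rm loc}$ as a spectral multiplier $m(P)$ and apply the multiplier theorem, Theorem~\ref{lemma multiplier}, in its dual form \eqref{eq.multiplier 2}. Since $\cos(tP)=\frac12(\e^{\I tP}+\e^{-\I tP})$, we have $A_{\rm loc}=m(P)$ with
\[
m(\tau)=\frac{1}{2T}\Big(\rho_T(\tau-\lambda)+\rho_T(\tau+\lambda)\Big),\qquad \rho_T(s)\coloneq\int_{-\infty}^{\infty}b(t)\widehat{a}(t/T)\e^{\I ts}\rd t .
\]
Thus $2\pi\rho_T$ is the inverse Fourier transform of the compactly supported smooth function $t\mapsto b(t)\widehat a(t/T)$. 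This function is supported in $[-2,2]$, and its $C^N$ norms are bounded uniformly in $T\geq1$, because derivatives falling on $\widehat a(t/T)$ produce factors $T^{-1}$. Integrating by parts gives $|\rho_T(s)|\lesssim_N(1+|s|)^{-N}$ uniformly in $T$. Hence
\[
|m(\tau)|\lesssim_N T^{-1}\big((1+|\tau-\lambda|)^{-N}+(1+\tau+\lambda)^{-N}\big),\qquad \tau\geq0 .
\]

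Next I would compute $K_q(|m|^{1/2})$. We have
\[
K_q(|m|^{1/2})^2=\sum_{k\geq0}\sup_{\tau\in[k,k+1)}|m(\tau)|\,(1+k)^{2\delta(q)}\lesssim_N T^{-1}\sum_{k}\big((1+|k-\lambda|)^{-N}+(1+k+\lambda)^{-N}\big)(1+k)^{2\delta(q)} .
\]
Consider the first term. For $|k-\lambda|\leq\lambda/2$ the weight is $\approx\lambda^{2\delta(q)}$ and the decay is summable. For $|k-\lambda|>\lambda/2$ we have $(1+k)^{2\delta(q)}\lesssim(1+|k-\lambda|)^{2\delta(q)}$, so taking $N>2\delta(q)+2$ gives a contribution $O(\lambda^{2\delta(q)+1-N})$, which is negligible. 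The second term is bounded by $\lambda^{-N'}$ for a large choice of $N$; this is the smoothing part. Together these give $K_q(|m|^{1/2})^2\lesssim\lambda^{2\delta(q)}T^{-1}\approx\lambda^{2\delta(q)}(\log\lambda)^{-1}$. Applying \eqref{eq.multiplier 2} with $W_1=W$ and $W_2=\overline W$ then yields \eqref{eq:Aloc at q}, since $T=c_0\log\lambda$.

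The only delicate point is the uniform-in-$T$ Schwartz decay of $\rho_T$ and the handling of the weight $(1+k)^{2\delta(q)}$ away from $\lambda$. The decay is routine once one notes that $\widehat a(\cdot/T)$ has derivatives bounded uniformly for $T\geq1$. The endpoint $q=\infty$ is no different, since the sup-based quantity $K_q$ is insensitive to it. Note also that the multiplier theorem requires no sign condition on $m$, because \eqref{eq.multiplier 2} is stated with $|m|^{1/2}$ and a partial isometry. So the complex-valued $\rho_T$ causes no problem.
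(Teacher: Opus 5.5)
Your proposal is correct and follows the paper's proof: you write $A_{\rm loc}=m_{\rm loc}(P)$, get $|m_{\rm loc}(\tau)|\lesssim_N T^{-1}\big((1+|\lambda-\tau|)^{-N}+(1+\lambda+\tau)^{-N}\big)$ by integration by parts, bound $K_q(|m_{\rm loc}|^{1/2})^2\lesssim T^{-1}\lambda^{2\delta(q)}+\lambda^{-N}$, and apply \eqref{eq.multiplier 2} with $W_1=W$, $W_2=\overline W$, and you supply details the paper leaves implicit (the decay being uniform in $T$, and the weight $(1+k)^{2\delta(q)}$ away from $\lambda$). One cosmetic slip: since $\e^{-\I t\lambda}\cos(t\tau)=\frac12\big(\e^{\I t(\tau-\lambda)}+\e^{-\I t(\tau+\lambda)}\big)$, the second term is $\rho_T(-(\tau+\lambda))$ rather than $\rho_T(\tau+\lambda)$, which does not affect the bound.
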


\begin{proof}
We write $A_{\rm loc}$ as a spectral multiplier,
\begin{align}
    A_{\rm loc}=m_{\rm loc}(P),\quad m_{\rm loc}(\tau)\coloneq  \frac{1}{T}\int_{-\infty}^{\infty}b(t)\widehat{a}(t/T)\e^{-\I t\lambda}\cos(t\tau) \rd t.
  \end{align}
  Integration by parts yields
  \begin{align}\label{eq:bound mloc}
    |m_{\rm loc}(\tau)|\lesssim_N T^{-1}((1+|\lambda-\tau|)^{-N}+(1+|\lambda+\tau|)^{-N}) .
  \end{align}
  Using Theorem~\ref{lemma multiplier} and observing that
  \begin{align}
    K_q(|m_{\rm loc}|^{1/2})
    \lesssim_N  T^{-1/2}\lambda^{\delta(q)}+\lambda^{-N} ,
  \end{align}
  yields the claimed bound.
\end{proof}  

\begin{remark}\label{rem:smoothing operator}
Since 
\[
|a(-T(\tau+\lambda)|\lesssim_N (1+|\lambda+\tau|)^{-N},
\]
the same proof yields
\[
\|W a(-T(P+\lambda)) \overline W\|_{\mathfrak S^{\alpha(q)'}}
\lesssim_N \lambda^{-N}\|W\|^2_{L^{2(q/2)'}}.
\]
\end{remark}

\begin{lemma}[Estimate for the global part]
Under the nonpositive curvature assumption and for $q>q_c$, there exists $\eps(q)>0$ such that
  \begin{align}\label{Aglo at s' with power gain}
    \|WA_{\rm glo}\overline{W}\|_{\mathfrak{S}^{2(q/2)'}}&\lesssim_{q}\lambda^{n(1-\frac{2}{q})-1-\eps(q)}\|W\|^2_{L^{2(q/2)'}} ,\quad W\in L^{2(q/2)'}(M).
  \end{align}
Moreover, for any $\beta<\frac{q(n-1)}{2n}$ there exists $\eps(q,\beta)>0$ such that
    \begin{align}\label{Aglo final at alphaq}
    \|WA_{\rm glo}\overline{W}\|_{\mathfrak{S}^{\beta'}}
    \lesssim_{q,\beta}
    \lambda^{n(1-\frac{2}{q})-1-\eps(q,\beta)}
    \|W\|^2_{L^{2(q/2)'}} ,\quad W\in L^{2(q/2)'}(M).
  \end{align}
\end{lemma}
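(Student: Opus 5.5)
Work on the manifold of nonpositive curvature through its universal cover, in the spirit of Bérard and Bourgain--Shao--Sogge--Yao, and bound the integral kernel of $A_{\rm glo}$ pointwise. Then interpolate between two endpoint Schatten estimates: a Hilbert--Schmidt ($\mathfrak S^2$) bound and an operator-norm ($\mathfrak S^\infty$) bound for $W A_{\rm glo}\overline W$.

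\textbf{Step 1 (kernel bound).} Lift $\cos(tP)$ to the universal cover $(\tilde M,\tilde g)$, which has no conjugate points. Write the kernel as a sum over deck transformations $\gamma$ of $\cos(t\sqrt{-\Delta_{\tilde g}})(\tilde x,\gamma\tilde y)$. Using the Hadamard parametrix (Bérard), the volume growth $\#\{\gamma: d(\tilde x,\gamma\tilde y)\le T\}\lesssim e^{CT}$, and stationary phase in $t$ against $\widehat a(t/T)(1-b(t))e^{-\I t\lambda}$, obtain
\[
|A_{\rm glo}(x,y)|\lesssim T^{-1}\lambda^{\frac{n-1}{2}}e^{CT}\quad\text{for all } x,y\in M .
\]
Choosing $c_0$ small makes $e^{CT}=\lambda^{Cc_0}$ an arbitrarily small power. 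This is the same kernel estimate underlying \cite{BourgainShaoYaoSogge-2015}, \cite{HassellTacy-2015}, and I would take it from there.

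\textbf{Step 2 (endpoints).} The $\mathfrak S^2$ bound comes from the kernel bound and Hölder:
\[
\|WA_{\rm glo}\overline W\|_{\mathfrak S^2}\le \|A_{\rm glo}\|_{L^\infty}\|W\|_{L^2}^2\lesssim \lambda^{\frac{n-1}{2}+Cc_0}\|W\|_{L^2}^2 .
\]
For the operator-norm endpoint, view $A_{\rm glo}=A-A_{\rm loc}$ as a spectral multiplier with symbol bounded by $O(1)$ concentrated near $\tau\approx\lambda$. Apply Theorem~\ref{lemma multiplier} with $q=\infty$, $\alpha=\infty$ (or the trivial $L^\infty$ Sogge bound). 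This gives $\|WA_{\rm glo}\overline W\|_{\mathfrak S^\infty}\lesssim \lambda^{n-1}\|W\|_{L^2}^2$ for $\|W\|_{L^2}$ weights, and more usefully bounds at the critical exponent $q_c$ with $W\in L^{n+1}$. Rather than crude bounds, I would run the complex interpolation (Stein) argument with the analytic family $W^{z}$-type weights. Its two endpoints are:
\begin{itemize}
\item at $\mathrm{Re}\,z$ corresponding to $L^2$ weights, the Hilbert--Schmidt kernel bound above;
\item at $\mathrm{Re}\,z$ corresponding to $q=q_c$ or $q=2$, the estimate $\|W A_{\rm glo}\overline W\|_{\mathfrak S^{\alpha(q)'}}\lesssim \lambda^{2\delta(q)}\|W\|^2$, which follows from Theorem~\ref{lemma multiplier}. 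Here $|m_{\rm glo}|\le |m|+|m_{\rm loc}|$ with $K_q\lesssim\lambda^{\delta(q)}$, since $a(T(\cdot-\lambda))$ has width $T^{-1}\le1$.
\end{itemize}

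\textbf{Step 3 (interpolation and exponent count).} Interpolate between $(2,\ \mathfrak S^2,\ \lambda^{\frac{n-1}{2}+Cc_0})$, where the weight lies in $L^{2}$ and corresponds to $q=\infty$, and $(q_c,\ \mathfrak S^{\alpha(q_c)'},\ \lambda^{2\delta(q_c)})$. At $q=\infty$ the target exponent is $n-1$, whereas the $\mathfrak S^2$ bound has $\frac{n-1}{2}+Cc_0$, a gain of $\frac{n-1}{2}-Cc_0$. At $q_c$ the exponent $2\delta(q_c)=n(1-2/q_c)-1$ has zero gain. For $q>q_c$ the interpolation parameter $\theta$ is positive, so the gain $\theta(\frac{n-1}{2}-Cc_0)$ is positive, giving $\eps(q)>0$. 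The interpolated Schatten exponent is $\ge 2(q/2)'$, which yields \eqref{Aglo at s' with power gain} after checking the indices.

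For \eqref{Aglo final at alphaq}, interpolate further between \eqref{Aglo at s' with power gain} (power gain, Schatten $2(q/2)'$) and the lossless bound at $\mathfrak S^{\alpha(q)'}$ from Theorem~\ref{lemma multiplier}, at the same $q$. Any $\beta'$ strictly between $\alpha(q)'$ and $2(q/2)'$ retains a positive fraction of the gain. Every $\beta<\alpha(q)=\frac{q(n-1)}{2n}$ has $\beta'>\alpha(q)'$, and $\beta'\le 2(q/2)'$ can be arranged, or else monotonicity of Schatten norms covers the remaining range. This gives $\eps(q,\beta)>0$.

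\textbf{Main obstacle.} The delicate step is Step 1: obtaining the global kernel bound uniformly, with the loss $e^{CT}$ absorbed by taking $c_0$ small. A secondary difficulty is making the bilinear Stein interpolation rigorous in Schatten classes, with weights $W|W|^{z}$ on both sides.
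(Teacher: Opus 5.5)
Your proof is correct. For \eqref{Aglo at s' with power gain} it coincides with the paper's argument:
\begin{itemize}
\item the B\'erard/BSSY kernel bound gives the Hilbert--Schmidt endpoint;
\item Theorem~\ref{lemma multiplier} at $q_c$ gives the $L^{n+1}\times L^{n+1}\to\mathfrak S^{n+1}$ endpoint;
\item bilinear complex interpolation with $\theta=1-q_c/q$ yields the gain $\theta(\frac{n-1}{2}-\eta)$.
\end{itemize}
The paper quotes abstract bilinear interpolation rather than running Stein's argument with $W|W|^z$ weights.

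One small correction: the interpolated Schatten exponent is exactly $2(q/2)'$, because the weight and Schatten exponents agree at both endpoints. An exponent strictly larger than $2(q/2)'$ would give a weaker statement, so your ``$\ge$'' points the wrong way.

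Your aside that a $q=2$ endpoint would also work is right. Interpolating $\|WA_{\rm glo}\overline W\|\lesssim\|W\|_{L^\infty}^2$ with the Hilbert--Schmidt bound gives the same main exponent. This is because the exponents $0,\frac{n-1}{n+1},\frac{n-1}{2}$ at $\frac1p=0,\frac1{n+1},\frac12$ all lie on the line $\frac1p\mapsto\frac{n-1}{p}$.

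For \eqref{Aglo final at alphaq} you take a genuinely different and shorter route.

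\textbf{The paper's route.} It splits $A_{\rm glo}=A^{(R)}_{\rm near}+A^{(R)}_{\rm far}$ spectrally. It uses the Weyl law to get $\operatorname{rank}A^{(R)}_{\rm near}\lesssim R\lambda^{n-1}$, which lets it pass from $\mathfrak S^{2(q/2)'}$ to $\mathfrak S^{\beta'}$ at cost $(R\lambda^{n-1})^{\frac1{\beta'}-\frac1{2(q/2)'}}$. It then tunes $\eta$ and $R$ to $\beta$, and disposes of $A^{(R)}_{\rm far}$ with Theorem~\ref{lemma multiplier}.

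\textbf{Your route.} You apply Theorem~\ref{lemma multiplier} to all of $m_{\rm glo}$. Since $|m_{\rm glo}(\tau)|\lesssim_N(1+|\tau-\lambda|)^{-N}+(1+|\tau+\lambda|)^{-N}$, this already gives the lossless bound $\lambda^{n(1-\frac2q)-1}\|W\|^2_{L^{2(q/2)'}}$ in $\mathfrak S^{\alpha(q)'}$. You then use H\"older's inequality for singular values at fixed $W$:
\[
\|S\|_{\mathfrak S^{\beta'}}\le\|S\|_{\mathfrak S^{\alpha(q)'}}^{1-\theta}\|S\|_{\mathfrak S^{2(q/2)'}}^{\theta},\qquad \frac1{\beta'}=\frac{1-\theta}{\alpha(q)'}+\frac{\theta}{2(q/2)'} .
\]
Since $\alpha(q)'<2(q/2)'$ for $q>q_c$, this gives $\eps(q,\beta)=\theta\,\eps(q)>0$ whenever $\beta'<2(q/2)'$. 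Monotonicity of Schatten norms covers $\beta'\ge 2(q/2)'$.

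\textbf{Comparison.} Your version needs no rank count, no auxiliary parameter $R$, and no $\beta$-dependent choice of $\eta$. The resulting $\theta$ is proportional to $\frac1\beta-\frac{2n}{q(n-1)}$, so the gain has the same form as the paper's $\eps(q,\beta)$. The paper's near/far mechanism uses strictly more input, since it still invokes Theorem~\ref{lemma multiplier} for the far part. Your argument is therefore a genuine simplification of this step.
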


\begin{proof}[Proof of \eqref{Aglo at s' with power gain}]
Again we write  $A_{\rm glo}$ as a spectral multiplier,
  \begin{align}
    A_{\rm glo}=m_{\rm glo}(P),\quad m_{\rm glo}(\tau)\coloneq  \frac{1}{T}\int_{-\infty}^{\infty}(1-b(t))\widehat{a}(t/T)\e^{-\I t\lambda}\cos(t\tau) \rd t .
  \end{align}
  Integration by parts yields
  \begin{align}\label{eq:bound mglo}
    |m_{\rm glo}(\tau)|\lesssim_N (1+|\lambda-\tau|)^{-N}+(1+|\lambda+\tau|)^{-N}.
  \end{align}
  Using Theorem~\ref{lemma multiplier} and the estimate
  \begin{align}
    K_{q_c}(|m_{\rm glo}|^{1/2})
    \lesssim\lambda^{\frac{1}{q_c}}
  \end{align}
  yields 
  \begin{align}\label{eq:A_glo at q=q_c}
    \|W_1A_{\rm glo}W_2\|_{\mathfrak{S}^{n+1}}\lesssim \lambda^{\frac{n-1}{n+1}}\|W_1\|_{L^{n+1}}\|W_2\|_{L^{n+1}}.
  \end{align}

We recall that $A_{\rm glo}$ depends on $T=c_0\log\lambda$. Given $\eta>0$ we may now fix $c_0$ such that the pointwise kernel estimate 
  \begin{align}\label{L1toLinfty bound Aglo Bourgain--Shao--Sogge--Yao}
    \|A_{\rm glo}\|_{L^{\infty}(M\times M)}\lesssim_{\eta} \lambda^{\frac{n-1}{2}+\eta}
  \end{align}
  holds, where we denoted the kernel of $A_{\rm glo}$ by the same symbol as the operator. This follows from \cite[Eq. (5.7)]{BourgainShaoYaoSogge-2015} and is also implicit in earlier work of Bérard \cite{Berard-1977} and Hassell--Tacy \cite{HassellTacy-2015}. 
  As an immediate consequence of \eqref{L1toLinfty bound Aglo Bourgain--Shao--Sogge--Yao}, we have the Hilbert--Schmidt bound
  \begin{align}\label{eq:A_glo at q=infinity}
    \|W_1A_{\rm glo}W_2\|_{\mathfrak{S}^{2}}\lesssim_{\eta} \lambda^{\frac{n-1}{2}+\eta}\|W_1\|_{L^{2}}\|W_2\|_{L^{2}}.
  \end{align}
We interpolate the bilinear map
\[
  \mathcal T(W_1,W_2)\coloneq W_1A_{\rm glo}W_2 .
\]
By \eqref{eq:A_glo at q=q_c}, we have
\[
  \|\mathcal T(W_1,W_2)\|_{\mathfrak S^{n+1}}
  \lesssim
  \lambda^{\frac{n-1}{n+1}}
  \|W_1\|_{L^{n+1}}\|W_2\|_{L^{n+1}},
\]
while \eqref{eq:A_glo at q=infinity} implies
\[
  \|\mathcal T(W_1,W_2)\|_{\mathfrak S^{2}}
  \lesssim_{\eta}
  \lambda^{\frac{n-1}{2}+\eta}
  \|W_1\|_{L^{2}}\|W_2\|_{L^{2}} .
\]
Hence, by bilinear complex interpolation
(see, e.g., \cite[Theorem 4.4.1]{MR482275}),
for every \(0\le \theta\le 1\),
\begin{align}\label{eq. TW1W2}
  \|\mathcal T(W_1,W_2)\|_{\mathfrak S^{r_\theta}}
  \lesssim_{\eta}
  \lambda^{(1-\theta)\frac{n-1}{n+1}
  +\theta(\frac{n-1}{2}+\eta)}
  \|W_1\|_{L^{p_\theta}}\|W_2\|_{L^{p_\theta}},
\end{align}
where
\begin{align}\label{eq. ptheta rtheta}
  \frac1{p_\theta}=\frac{1-\theta}{n+1}+\frac{\theta}{2},
  \qquad
  \frac1{r_\theta}=\frac{1-\theta}{n+1}+\frac{\theta}{2}.
\end{align}
Here we used \cite[Theorem 4.4.1]{MR482275} with
$A_0^{(1)}=A_0^{(2)}=L^{n+1}$, $A_1^{(1)}=A_1^{(2)}=L^{2}$, $B_0=\mathfrak{S}^{n+1}$, $B_1=\mathfrak{S}^2$ and the fact that $L^p$ and Schatten spaces are interpolation spaces, with
$(L^{p_1},L^{p_2})_{\theta}=L^{p_{\theta}}$ and $(\mathfrak{S}^{p_1},\mathfrak{S}^{p_2})_{\theta}=\mathfrak{S}^{p_{\theta}}$ for $\frac{1}{p_{\theta}}=\frac{1-\theta}{p_1}+\frac{\theta}{p_2}$ and $1\leq p_1,p_2\leq\infty$.

Choosing
\(
  \theta=1-\frac{q_c}{q},
\)
we obtain
\(
  p_\theta=r_\theta=2(q/2)'
\)
in \eqref{eq. ptheta rtheta}.
Therefore, setting \(W_1=W\) and \(W_2=\overline{W}\) in \eqref{eq. TW1W2},
\begin{align}\label{eq:A_glo interpolated}
  \|WA_{\rm glo}\overline W\|_{\mathfrak S^{2(q/2)'}}
  \lesssim_{\eta}
  \lambda^{(n-1)(\frac{1}{2}-\frac{1}{q})+\eta(1-\frac{q_c}{q})}
  \|W\|_{L^{2(q/2)'}}^2.
\end{align}
Fixing $\eta:=\frac{n-1}{4}$, this shows that
 \eqref{Aglo at s' with power gain} holds with
\[
      \eps(q)=\frac{n-1}{4}\Big(1-\frac{q_c}{q}\Big).\qedhere
\]
  \end{proof}

\begin{proof}[Proof of \eqref{Aglo final at alphaq}]
Let \(\chi\in C_c^\infty(\R)\) such that \(0\leq \chi\leq 1\), \(\chi=1\) on \([-1,1]\), and \(\supp\chi\subset [-2,2]\). For $R\in [1,\lambda]$, to be fixed later, define
  \[
    m_{\rm near}^{(R)}(\tau)\coloneq\chi\Big(\frac{\tau-\lambda}{R}\Big)m_{\rm glo}(\tau),
    \qquad
    m_{\rm far}^{(R)}(\tau)\coloneq\Big(1-\chi\Big(\frac{\tau-\lambda}{R}\Big)\Big)m_{\rm glo}(\tau),
  \]
  and
  \begin{equation}\label{def:Anear-far}
    A_{\rm near}^{(R)}\coloneq m_{\rm near}^{(R)}(P),
    \qquad
    A_{\rm far}^{(R)}\coloneq m_{\rm far}^{(R)}(P).
  \end{equation}
Then \(A_{\rm glo}=A_{\rm near}^{(R)}+A_{\rm far}^{(R)}\). We will collect some properties of these operators in Lemma \ref{lem:near_far} below.

We first estimate the near part. In view of \eqref{Aglo at s' with power gain}, we may assume that \(\beta'<2(q/2)'\). Using
  ~\eqref{eq:rank-improvement-Anear-R} with 
 \(s'=2(q/2)'\),  we have
  \begin{align}
    \|WA_{\rm near}^{(R)}\overline{W}\|_{\mathfrak{S}^{\beta'}}
    \lesssim
    (R\lambda^{n-1})^{\frac1{\beta'}-\frac1{2(q/2)'}}
    \|WA_{\rm near}^{(R)}\overline{W}\|_{\mathfrak{S}^{2(q/2)'}}.
  \end{align}
  Since \(A_{\rm near}^{(R)}\) is a spectral multiplier satisfying the same pointwise
  bound~\eqref{eq:bound mglo} as \(A_{\rm glo}\), the estimate
  ~\eqref{eq:A_glo interpolated} applies to \(A_{\rm near}^{(R)}\) as well. Thus, after some straightforward arithmetic simplifications, using $\alpha(q)=\frac{q(n-1)}{2n}$ for $q\geq q_c$, 
  \begin{align*}
    \|WA_{\rm near}^{(R)}\overline{W}\|_{\mathfrak{S}^{\beta'}}
    &\lesssim
    R^{\frac1{\beta'}-\frac1{2(q/2)'}}
    \lambda^{n(1-\frac{2}{q})-1+\eta(1-\frac{q_c}{q})-(n-1)(\frac{1}{\beta}-\frac{2n}{q(n-1)})}
    \|W\|^2_{L^{2(q/2)'}}.
  \end{align*}
We now set 
  \begin{align*}
\eps(q,\beta)\coloneqq \frac{n-1}{4}\Big(\frac{1}{\beta}-\frac{2n}{q(n-1)}\Big),\quad 
\eta(q,\beta)\coloneqq \frac{2\eps(q,\beta)}{1-\frac{q_c}{q}},\quad R(\lambda,q,\beta)\coloneqq \lambda^{\frac{\eps(q,\beta)}{\frac{1}{\beta'}-\frac{1}{2(q/2)'}}}
  \end{align*}
  and fix $\eta=\eta(q,\beta)$, $R=R(\lambda,q,\beta)$ to obtain
  \begin{align}\label{estimate near part}
    \|WA_{\rm near}^{(R)}\overline{W}\|_{\mathfrak{S}^{\beta'}}
    &\lesssim
    \lambda^{n(1-\frac{2}{q})-1-\eps(q,\beta)}
    \|W\|^2_{L^{2(q/2)'}}.
  \end{align}
  
  Using \eqref{eq:Afar-negligible-R} with $N$ sufficiently large yields a better estimate than \eqref{estimate near part} for the far part $A_{\rm far}^{(R)}$. 
Combining the near and far bounds, we obtain \eqref{Aglo final at alphaq}.
\end{proof}

\begin{lemma}\label{lem:near_far} With the above notation, the following properties hold.
     \begin{enumerate}[(i)]
    \item We have
          \begin{align}\label{eq:rank-Anear-R}
            \operatorname{rank} (WA_{\rm near}^{(R)}\overline{W})\lesssim R\,\lambda^{n-1}
          \end{align}
   and if \(1\leq \beta'\leq s'\leq \infty\), then
          \begin{align}\label{eq:rank-improvement-Anear-R}
            \|WA_{\rm near}^{(R)}\overline{W}\|_{\mathfrak{S}^{\beta'}}
            \lesssim
            (R\lambda^{n-1})^{\frac1{\beta'}-\frac1{s'}}
            \|WA_{\rm near}^{(R)}\overline{W}\|_{\mathfrak{S}^{s'}}.
          \end{align}       

    \item For every \(q\geq 2\) and every \(N>0\),
          \begin{align}\label{Kq-mfar-R}
            K_q\big(|m_{\rm far}^{(R)}|^{1/2}\big)
            \lesssim_N
            R^{-N}\lambda^{\delta(q)}.
          \end{align}
          Consequently, Theorem~\ref{lemma multiplier} implies
          \begin{align}\label{eq:Afar-negligible-R}
            \|WA_{\rm far}^{(R)}\overline{W}\|_{\mathfrak{S}^{\alpha(q)'}}
            \lesssim_N
            R^{-N}\lambda^{2\delta(q)}
            \|W\|_{L^{2(q/2)'}(M)}^2.
          \end{align}
  \end{enumerate}
\end{lemma}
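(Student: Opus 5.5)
For part (i), the rank bound comes from the support of $m_{\rm near}^{(R)}$. Since $\supp\chi\subset[-2,2]$, the function $m_{\rm near}^{(R)}$ vanishes outside $[\lambda-2R,\lambda+2R]$. Hence
\[
A_{\rm near}^{(R)}=A_{\rm near}^{(R)}\mathbf{1}_{[\lambda-2R,\lambda+2R]}(P),
\]
so its range lies in $\Ran\mathbf{1}_{[\lambda-2R,\lambda+2R]}(P)$. Because $\operatorname{rank}(XY)\le\operatorname{rank}(Y)$, we get
\[
\operatorname{rank}(WA_{\rm near}^{(R)}\overline W)\le \operatorname{rank}A_{\rm near}^{(R)}\le N(\lambda+2R)-N(\max(\lambda-2R,0)).
\]
This is an eigenvalue count. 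Cover $[\lambda-2R,\lambda+2R]$ by $O(R)$ unit windows $[k,k+1)$ with $k\lesssim\lambda$ (recall $R\le\lambda$). The local Weyl law, or simply Hörmander's remainder estimate $N(\mu)=c\mu^n+O(\mu^{n-1})$, gives $N(k+1)-N(k)\lesssim (1+k)^{n-1}\lesssim\lambda^{n-1}$ for each window. Summing yields \eqref{eq:rank-Anear-R}.

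The Schatten comparison \eqref{eq:rank-improvement-Anear-R} then follows from Hölder's inequality for singular values. If $S$ has rank at most $r$, its singular values $s_1,\dots,s_r$ satisfy
\[
\|S\|_{\mathfrak S^{\beta'}}=\Big(\sum_{i\le r}s_i^{\beta'}\Big)^{1/\beta'}\le r^{\frac1{\beta'}-\frac1{s'}}\Big(\sum_{i\le r}s_i^{s'}\Big)^{1/s'}
\]
for $\beta'\le s'$, with the obvious modification when $s'=\infty$. Apply this with $r\lesssim R\lambda^{n-1}$.

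For part (ii), start from the Schwartz-type bound \eqref{eq:bound mglo}, $|m_{\rm glo}(\tau)|\lesssim_N(1+|\tau-\lambda|)^{-N}$ for $\tau\ge0$. The factor $1-\chi((\tau-\lambda)/R)$ vanishes unless $|\tau-\lambda|\ge R$. On that region
\[
|m_{\rm far}^{(R)}(\tau)|\lesssim_N (1+|\tau-\lambda|)^{-2N-2n}\le R^{-2N}(1+|\tau-\lambda|)^{-2n}.
\]
Insert this into the definition of $K_q$:
\[
K_q(|m_{\rm far}^{(R)}|^{1/2})^2\lesssim R^{-2N}\sum_k(1+|k-\lambda|)^{-n}(1+k)^{2\delta(q)}.
\]
Use $(1+k)\lesssim \lambda(1+|k-\lambda|)$ and $2\delta(q)\le n-1$ (the worst case being $q=\infty$). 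Then the sum is bounded by $\lambda^{2\delta(q)}\sum_k(1+|k-\lambda|)^{-1-\epsilon}$, where $\epsilon$ is chosen with some slack in the exponent so the series converges; enlarge $N$ as needed. This gives \eqref{Kq-mfar-R}.

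Finally, \eqref{eq:Afar-negligible-R} follows by inserting \eqref{Kq-mfar-R} into \eqref{eq.multiplier 2} of Theorem~\ref{lemma multiplier} with $W_1=W$, $W_2=\overline W$. This step is only bookkeeping of exponents.

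The only genuine input is the eigenvalue-counting bound in (i). I would quote Hörmander's Weyl law rather than reprove it, noting that no curvature assumption is needed here.
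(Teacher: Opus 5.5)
Your proposal is correct and follows essentially the same route as the paper. For (i) you bound the rank via the support of $m_{\rm near}^{(R)}$ and a Weyl-law eigenvalue count (Hörmander's remainder over unit windows, equivalent here), then apply H\"older on singular values; for (ii) you feed the decay of $m_{\rm glo}$ on $|\tau-\lambda|\ge R$ into $K_q$ and invoke \eqref{eq.multiplier 2}. One small slip: with the exponent $-n$ in your displayed sum, the series at $q=\infty$ reduces to $\sum_k(1+|k-\lambda|)^{-1}$, which diverges, so you should keep the factor $(1+|k-\lambda|)^{-2n}$ you actually derived, which gives total exponent at most $-n-1$ and makes the convergence explicit.
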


\begin{proof}
  Part (i) follows by definition immediately, since \(m_{\rm near}^{(R)}\) is supported where
  \(|\tau-\lambda|\leq 2R\), so
  \[
    \operatorname{rank}A_{\rm near}^{(R)}
    \leq
    \operatorname{rank}\mathbf{1}_{[\lambda-2R,\lambda+2R]}(P)
    \lesssim R\,\lambda^{n-1},
  \]
  where the last inequality follows from the Weyl law \eqref{eq:weyl-law}; the logarithmic improvement is irrelevant here.

  By \eqref{eq:rank-Anear-R} and H\"older,
  \[
    \|WA_{\rm near}^{(R)}\overline{W}\|_{\mathfrak{S}^{\beta'}}
    \lesssim
    (R\lambda^{n-1})^{\frac1{\beta'}-\frac1{s'}}
    \|WA_{\rm near}^{(R)}\overline{W}\|_{\mathfrak{S}^{s'}}.
  \]

  For (ii), on the support of \(m_{\rm far}^{(R)}\) one has \(|\tau-\lambda|\geq R\). Therefore, by the assumption~\eqref{eq:bound mglo},
  \[
    |m_{\rm far}^{(R)}(\tau)|
    \lesssim_N    R^{-N/2}(1+|\lambda-\tau|)^{-N/2}+(1+|\lambda+\tau|)^{-N}.
  \]
  Hence
  \begin{align*}
    K_q\big(|m_{\rm far}^{(R)}|^{1/2}\big)^2
    =
    \sum_{k=0}^\infty
    \sup_{\tau\in[k,k+1)}|m_{\rm far}^{(R)}(\tau)|
    (1+k)^{2\delta(q)} 
    \lesssim_N
    R^{-N/2}\lambda^{2\delta(q)}+\lambda^{-N}.
  \end{align*}
  After replacing \(N\) by \(2N\), this gives~\eqref{Kq-mfar-R}. Inequality \eqref{eq:Afar-negligible-R} follows from Theorem~\ref{lemma multiplier}.
\end{proof}

\section*{Acknowledgements}
 J.-C. C. and X. S. acknowledge support through the Engineering \& Physical Sciences Research Council (EP/X011488/1).
 N. N. N. acknowledges partial support through the European Union through the European Research Council’s Starting Grant FermiMath, grant agreement nr. 101040991.

\bibliographystyle{siam}
\bibliography{biblio.bib}

\end{document}